\documentclass[12pt]{article}
\usepackage[utf8]{inputenc}

\usepackage{geometry,amsmath,mathtools,blkarray,amssymb,amsthm,setspace,xcolor,bbm,tikz-cd,xparse,amsfonts,authblk,graphics,datetime,thmtools,bm}

\usepackage[normalem]{ulem}

\usepackage[shortlabels]{enumitem}
\usepackage[colorlinks,linkcolor=blue,citecolor=blue,urlcolor=blue,bookmarks=false,hypertexnames=true]{hyperref}
\usepackage[
    backend=biber,
    style=bwl-FU, 
    sortcites=false, 
    maxcitenames=2, 
    mincitenames=1, 
    maxbibnames=8, 
    uniquelist=false
]{biblatex}

\usepackage{tikz-cd}
\usetikzlibrary{decorations.text}
\usepackage{tikz,tikz-3dplot}    
\usetikzlibrary{decorations.pathreplacing}

\allowdisplaybreaks

\addbibresource{ref.bib}

\providecommand{\keywords}[1]
{
  \small	
  \textbf{{Keywords---}} #1
}

\setlength\parindent{24pt}

\theoremstyle{plain}
\newtheorem{theorem}{Theorem}[section]

\newtheorem{lemma}{Lemma}[section]
\newtheorem{corollary}{Corollary}[section]

\theoremstyle{definition}

\renewcommand\thmcontinues[1]{Continued}

\newcommand{\tr}{\mathrm{tr}}
\newcommand{\norm}[1]{\left\|#1\right\|}
\newcommand{\ip}[2]{\left<#1, #2\right>}

\NewDocumentCommand{\defmathletter}{m}{%
    \expandafter\newcommand\csname b#1\endcsname{\mathbb{#1}}%
    \expandafter\newcommand\csname c#1\endcsname{\mathcal{#1}}%
}
\NewDocumentCommand{\defmathletters}{>{\SplitList{,}}m}{\ProcessList{#1}{\defmathletter}}
\defmathletters{A,B,C,D,E,F,G,H,I,J,K,L,M,N,O,P,Q,R,S,T,U,V,X,Y,Z}

\NewDocumentCommand{\defvector}{m}{%
    \expandafter\newcommand\csname v#1\endcsname{\mathbf{#1}}%
}
\NewDocumentCommand{\defvectors}{>{\SplitList{,}}m}{\ProcessList{#1}{\defvector}}
\defvectors{A,B,C,D,E,F,G,H,I,J,K,L,M,N,O,P,Q,R,S,T,U,V,X,Y,Z}
\defvectors{a,b,c,d,e,f,g,h,i,j,k,l,m,n,o,p,q,r,s,t,u,v,x,y,z}

\newdateformat{monthyeardate}{\monthname[\THEMONTH] \THEYEAR}

\title{Well-Posedness of Second-Order Uniformly Elliptic PDEs with Neumann Conditions}
\author{Haruki Kono \thanks{Email: hkono@mit.edu.}}
\affil{MIT}
\date{\monthyeardate\today}

\begin{document}

\maketitle

\begin{abstract}
Extending the results of \cite{nardi2015schauder}, this note establishes an existence and uniqueness result for second-order uniformly elliptic PDEs in divergence form with Neumann boundary conditions.
A Schauder estimate is also derived.
\end{abstract}

\keywords{elliptic PDE, Neumann boundary condition}

\section{Introduction}
For a $C^{2, \alpha}$-domain $\Omega \subset \bR^d,$ we consider 
\begin{equation} \label{eq:main-pde}
    \begin{cases}
        \nabla \cdot (A \nabla u) = f \text{ in } \Omega \\
        \ip{A \nabla u}{\vn} = g \text{ on } \partial \Omega
    \end{cases}
    ,
\end{equation}
where $f \in C^{0, \alpha} (\bar \Omega),$ $g \in C^{1, \alpha} (\bar \Omega),$ and $\vn : \partial \Omega \to \bR^d$ is the outward normal unit vector of $\partial \Omega.$
Throughout this note, we assume that $A = (a^{ij}) : \bar \Omega \to \bR^{d \times d}$ is a matrix valued function that satisfies $a^{ij} \in C^{1, \alpha} (\bar \Omega)$ and is uniformly elliptic, i.e., there exists $\lambda > 0$ such that $v^\prime A (x) v \geq \lambda$ for $x \in \Omega$ and $v \in \bR^d$ with $\norm{v} = 1.$
The symmetric part of $A$ defined as $A_S \coloneqq (A + A^\prime) / 2$ admits the eigendecomposition $A_S = \sum_i \lambda_i q_i q_i^\prime,$ where $\lambda_i \geq \lambda > 0$ and $q_i$ forms an orthogonal basis of $\bR^d.$

The PDE (\ref{eq:main-pde}) arises ubiquitously in applied science fields, including thermodynamics \cite{carslaw1959heat}, image processing \cite{weickert1998anisotropic}, and control theory \cite{sokolowski1992introduction}.
Its well-posedness is well known among experts but has not been explicitly stated in the literature, whereas results for second-order elliptic PDEs with Dirichlet and oblique boundary conditions are readily available in standard textbooks such as \cite{gilbarg1977elliptic}.
For the Laplace operator, i.e., the case of $A = I$, \cite{nardi2015schauder} establishes existence and uniqueness and derives a Schauder estimate.
In this note, we aim to provide an elementary proof of these results for second-order uniformly elliptic PDEs in divergence form, by extending the ideas developed by \cite{nardi2015schauder}.

\section{Existence and Uniqueness Result}
Before stating our main result, we shall show several auxiliary propositions.

\begin{lemma} \label{lem:second-derivative-is-definite}
Let $\Omega \subset \bR^d$ be a $C^2$-domain.
If $u \in C^2 (\bar \Omega)$ satisfies $u (p) = \max_{\bar \Omega} u \ (\text{resp.} \ \min_{\bar \Omega} u)$ and $\nabla u (p) = 0$ for $p \in \partial \Omega,$ then $D^2 u (p)$ is negative (resp. positive) semi-definite.
\end{lemma}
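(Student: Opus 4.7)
The claim amounts to showing $v^\prime D^2 u(p) v \leq 0$ for every unit vector $v \in \bR^d$ in the maximum case; the minimum case then follows by applying the maximum case to $-u.$ Because the quadratic form $v \mapsto v^\prime D^2 u(p) v$ is even, I may swap $v$ for $-v$ if needed and assume $v \cdot \vn(p) \leq 0,$ so that $v$ either points strictly into $\Omega$ or is tangent to $\partial \Omega$ at $p.$

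The plan is to construct, for any such $v,$ a $C^2$ curve $\gamma : [0, \epsilon) \to \bar \Omega$ with $\gamma(0) = p$ and $\gamma^\prime(0) = v,$ and then apply the one-sided second-derivative test to $h(t) \coloneqq u(\gamma(t)).$ Two cases arise. When $v \cdot \vn(p) < 0,$ the straight segment $\gamma(t) = p + t v$ stays in $\Omega$ for small $t > 0$: writing $\partial \Omega$ locally as the graph of a $C^2$ function whose gradient vanishes at $p,$ the boundary deviates from its tangent hyperplane only quadratically in $t,$ while $t v$ moves into $\Omega$ linearly in $t.$ When $v \cdot \vn(p) = 0,$ the straight line may exit $\bar \Omega,$ so instead I use that $\partial \Omega$ is a $C^2$ submanifold to produce a $C^2$ curve lying in $\partial \Omega \subset \bar \Omega$ through $p$ with initial velocity $v.$

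Once $\gamma$ is in hand, $h$ attains its maximum on $[0, \epsilon)$ at $t = 0,$ and the chain rule together with $\nabla u (p) = 0$ gives $h^\prime (0) = \nabla u (p) \cdot v = 0$ and $h^{\prime\prime} (0) = v^\prime D^2 u(p) v + \nabla u(p) \cdot \gamma^{\prime\prime}(0) = v^\prime D^2 u(p) v.$ Taylor's theorem then yields $h(t) - h(0) = \tfrac{1}{2} v^\prime D^2 u(p) v \cdot t^2 + o(t^2) \leq 0$ for all sufficiently small $t > 0,$ which forces $v^\prime D^2 u(p) v \leq 0$ as desired.

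The only delicate point is the tangent case, where the linear test curve can leave $\bar \Omega$ if the boundary curves inward; invoking the $C^2$-regularity of $\partial \Omega$ to supply an admissible curve is what makes the argument go through. Everything else is a routine application of the familiar one-variable second-derivative test on $[0, \epsilon).$
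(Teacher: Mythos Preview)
Your argument is correct and mirrors the paper's proof: both split into the transverse case (where a straight segment into $\Omega$ suffices) and the tangent case (where a $C^2$ curve adapted to the boundary is needed), and both finish with the one-sided second-derivative test at $t=0$. The only cosmetic differences are that the paper phrases the one-variable step as a contradiction via integration rather than Taylor expansion, and in the tangent case it routes the curve into $\Omega$ through a boundary-straightening chart rather than along $\partial\Omega$; your observation that $\nabla u(p)=0$ kills the $\gamma''(0)$ term is exactly what makes either choice of curve give $h''(0)=v^\prime D^2 u(p)\,v$.
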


\begin{proof}
We shall show only the case where $p$ attains the maximum.
It suffices to show $v^\prime D^2 u (p) v \leq 0$ for each $v \in \bR^d \setminus \{0\}.$
We consider two distinct cases.

First, suppose that $v \notin T_p \partial \Omega.$
Then we have $p + \varepsilon v \in \Omega$ or $p - \varepsilon v \in \Omega$ for small enough $\varepsilon > 0.$
Since the same argument goes through, we focus on the first case.
Consider the map $f : [0, \bar \varepsilon) \ni \varepsilon \mapsto u (p + \varepsilon v)$ for small $\bar \varepsilon > 0.$
Observe that $f^{\prime \prime} (0+)$ exists since $u \in C^2 (\bar \Omega).$
Suppose for a contradiction that $f^{\prime \prime} (0 +) > 0.$
Then for small $\varepsilon > 0,$ we have $f^\prime (\varepsilon) = f^\prime (0+) + \int_0^\varepsilon f^{\prime\prime} (\delta) d \delta > f^\prime (0 +) = 0,$ where the last equality holds because $\nabla u (p) = 0.$
Hence, it holds that $f (\varepsilon) = f (0) + \int_0^\varepsilon f^\prime (\delta) d \delta > f (0).$
Since $f (\varepsilon) = u (p + \varepsilon v)$ and $f (0) = u (p),$ this inequality contradicts the fact that $u$ attains its maximum at $p.$
Therefore, we have $f^{\prime\prime} (0 +) \leq 0,$ which implies $v^\prime D^2 u (p) v = f^{\prime\prime} (0 +) \leq 0.$

Next, suppose that $v \in T_p \partial \Omega.$
Recall from Section 6.2 of \cite{gilbarg1977elliptic} and Definition 2.2 of \cite{nardi2015schauder} that there is a diffeomorphism $\varphi_p$ around $p$ that ``straightens'' $\partial \Omega.$
There exists a $C^2$-path $c_\varepsilon \in \bR^d$ such that $\varphi_p (c_0) = p,$ $\varphi_p (c_\varepsilon) \in \Omega$ and $v = \lim_{\varepsilon \to 0} \frac{u (\varphi_p (c_\varepsilon)) - u (p)}{\varepsilon}.$
The rest of proof runs similarly for $f : [0, \bar \varepsilon) \ni \varepsilon \mapsto u (\varphi_p (c_\varepsilon)).$
\end{proof}

\begin{corollary} \label{cor:maximum-principle}
Let $\Omega \subset \bR^d$ be a $C^2$-domain.
If $u \in C^2 (\bar \Omega)$ satisfies $u (p) = \max_{\bar \Omega} u \ (\text{resp.} \ \min_{\bar \Omega} u)$ and $\nabla u (p) = 0$ for $p \in \partial \Omega,$ then $\nabla \cdot (A \nabla u) (p) \leq 0 \ (\text{resp.} \ \geq 0).$
\end{corollary}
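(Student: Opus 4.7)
The plan is to reduce the conclusion to the semi-definiteness statement of Lemma \ref{lem:second-derivative-is-definite} by expanding the divergence at the point $p$. I would treat only the maximum case, as the minimum case is symmetric.

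First I would expand
\[
    \nabla \cdot (A \nabla u)(p) = \sum_{i,j} (\partial_i a^{ij})(p)\, \partial_j u(p) + \sum_{i,j} a^{ij}(p)\, \partial_i \partial_j u(p).
\]
Because $\nabla u(p) = 0$ by hypothesis, the first sum vanishes. The second sum is the trace $\tr(A(p)\, D^2 u(p))$, so the problem reduces to showing that this trace is non-positive.

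Next, I would exploit the symmetry of $D^2 u(p)$. Writing $A = A_S + A_A$ with $A_A \coloneqq (A - A^\prime)/2$ antisymmetric, one has $\tr(A_A(p) D^2 u(p)) = 0$ because the trace of the product of an antisymmetric and a symmetric matrix is zero. Therefore
\[
    \nabla \cdot (A \nabla u)(p) = \tr(A_S(p)\, D^2 u(p)).
\]
Using the eigendecomposition $A_S(p) = \sum_i \lambda_i q_i q_i^\prime$ already recorded in the introduction, with $\lambda_i \geq \lambda > 0$ and $\{q_i\}$ an orthonormal basis, this becomes
\[
    \nabla \cdot (A \nabla u)(p) = \sum_i \lambda_i\, q_i^\prime D^2 u(p)\, q_i.
\]

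Finally I would invoke Lemma \ref{lem:second-derivative-is-definite}, which asserts that $D^2 u(p)$ is negative semi-definite, so each quadratic form $q_i^\prime D^2 u(p) q_i \leq 0$. Since $\lambda_i > 0$, every term in the sum is non-positive, and the conclusion $\nabla \cdot (A \nabla u)(p) \leq 0$ follows. There is no real obstacle; the only subtle bookkeeping is the cancellation of $A_A$ in the trace, which lets us pass to the symmetric part of $A$ where Lemma \ref{lem:second-derivative-is-definite} and uniform ellipticity can be combined.
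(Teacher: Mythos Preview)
Your argument is correct and is essentially the paper's own proof: expand $\nabla\cdot(A\nabla u)(p)$, use $\nabla u(p)=0$ to kill the first-order terms, pass to $\tr(A_S(p)D^2u(p))$ via the eigendecomposition of $A_S$, and finish with Lemma~\ref{lem:second-derivative-is-definite}. The only difference is that you spell out the product-rule expansion and the $A_A$-cancellation that the paper leaves implicit.
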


\begin{proof}
Recall the eigendecomposition $A_S = \sum_i \lambda_i q_i q_i^\prime$ where $\lambda_i > 0.$
As $\nabla u (p) = 0,$ we have
$$
    \nabla \cdot (A \nabla u) (p)
    =
    \tr (A_S (p) D^2 u (p))
    =
    \sum_i
    \lambda_i (p)
    q_i (p)^\prime (D^2 u (p)) q_i (p)
    .
$$
The conclusion holds by Lemma \ref{lem:second-derivative-is-definite}.
\end{proof}

\begin{lemma} \label{lem:estimate}
Let $\Omega \subset \bR^d$ be a $C^2$-domain.
For $f \in C^{0, \alpha} (\bar \Omega),$ suppose that $u \in C^{2, \alpha} (\bar \Omega)$ is a solution to
$$
    \begin{cases}
        \nabla \cdot (A \nabla u) - u = f \text{ in } \Omega \\
        \ip{A \nabla u}{\vn} = 0 \text{ on } \partial \Omega
    \end{cases}
    .
$$
Then $\norm{u}_{C^0} \leq \norm{f}_{C^0}.$
\end{lemma}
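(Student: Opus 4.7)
The plan is a maximum principle argument. Let $p \in \bar\Omega$ be a point where $u$ attains its maximum and $q \in \bar\Omega$ a point where it attains its minimum. I will show $u(p) \leq \norm{f}_{C^0}$; the symmetric argument gives $u(q) \geq -\norm{f}_{C^0}$, and combining these yields the claim. The key reduction in both cases will be to establish $\nabla u(p) = 0$, after which Corollary \ref{cor:maximum-principle} (or its trivial interior analogue) provides $\nabla \cdot (A \nabla u)(p) \leq 0$, so the PDE gives
\begin{align*}
    u(p) = \nabla \cdot (A \nabla u)(p) - f(p) \leq -f(p) \leq \norm{f}_{C^0}.
\end{align*}

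For the interior case $p \in \Omega$, standard calculus gives $\nabla u(p) = 0$ and $D^2 u(p)$ negative semidefinite, so $\nabla \cdot (A \nabla u)(p) = \tr(A_S(p) D^2 u(p)) = \sum_i \lambda_i(p) q_i(p)^\prime D^2 u(p) q_i(p) \leq 0$, and we are done as above.

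The main obstacle is the boundary case $p \in \partial\Omega$, where Corollary \ref{cor:maximum-principle} demands $\nabla u(p) = 0$ but the Neumann-type condition only supplies $\ip{A \nabla u(p)}{\vn(p)} = 0$. The trick is to exploit that $p$ also maximizes $u|_{\partial\Omega}$, so every tangential derivative vanishes; consequently $\nabla u(p) \perp T_p \partial\Omega$, and hence $\nabla u(p) = c\, \vn(p)$ for some scalar $c$. Substituting into the boundary condition yields
\begin{align*}
    0 = \ip{A(p) \nabla u(p)}{\vn(p)} = c \ip{A(p) \vn(p)}{\vn(p)},
\end{align*}
and uniform ellipticity forces $\ip{A(p) \vn(p)}{\vn(p)} \geq \lambda > 0$, so $c = 0$ and $\nabla u(p) = 0$. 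Corollary \ref{cor:maximum-principle} then delivers $\nabla \cdot (A \nabla u)(p) \leq 0$, completing the bound at $p$. Running the analogous argument at the minimum point $q$ (where the inequalities are reversed) gives $u(q) \geq -\norm{f}_{C^0}$, and together these bounds yield $\norm{u}_{C^0} \leq \norm{f}_{C^0}$.
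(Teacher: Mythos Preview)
Your proof is correct and follows essentially the same maximum-principle strategy as the paper: handle interior extrema via the standard first- and second-order conditions, and at a boundary extremum combine the vanishing tangential derivatives with the Neumann condition and uniform ellipticity to force $\nabla u(p)=0$, then invoke Corollary~\ref{cor:maximum-principle}. The only cosmetic differences are that the paper works with a single point realizing $\max_{\bar\Omega}|u|$ rather than separate max/min points, and in the boundary step it phrases the argument as ``$\nabla u(p)$ is orthogonal to both $T_p\partial\Omega$ and $A(p)^\prime \vn(p)$, which together span $\bR^d$'' rather than your equivalent parametrization $\nabla u(p)=c\,\vn(p)$.
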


\begin{proof}
Let $p \in \bar \Omega$ such that $|u (p)| = \max_{\bar \Omega} |u|.$

First, suppose that $p \in \Omega,$ and that $u (p) = \max_{\bar \Omega} |u|.$
The same argument works for the case of $u (p) = - \max_{\bar \Omega} |u|$ too.
Observe that $u (p) \geq 0.$
Since $p$ is an interior maximizer, $\nabla u (p) = 0$ holds, and $D^2 u (p)$ is negative semi-definite.
Therefore, we have
$$
    \nabla \cdot (A \nabla u) (p)
    =
    \tr (A_S (p) D^2 u (p))
    =
    \sum_i \lambda_i (p) q_i (p)^\prime D^2 u (p) q_i (p)
    \leq
    0
    ,
$$
where $\lambda_i (p) > 0.$
Thus, $0 \leq u (p) = \nabla \cdot (A \nabla u) (p) - f (p) \leq - f (p),$ and consequently, $\norm{u}_{C^0} \leq \norm{f}_{C^0}.$

Next, suppose that $p \in \partial \Omega$ and that $u (p) = \max_{\bar \Omega} |u|.$
Since $p$ is a maximizer of $u|_{\partial \Omega}$ in particular, $\ip{\nabla u (p)}{\tau} = 0$ holds for $\tau \in T_p \partial \Omega.$
Recall also that $0 = \ip{A (p) \nabla u (p)}{\vn (p)} = \ip{\nabla u (p)}{A (p)^\prime \vn (p)}.$
Since $\ip{\vn (p)}{A (p)^\prime \vn (p)} \geq \lambda > 0$ by the uniform ellipticity of $A,$ we have $A (p)^\prime \vn (p) \notin T_p \partial \Omega.$
By these, we observe that $T_p \partial \Omega$ and $A (p)^\prime \vn (p)$ linearly spans $\bR^d.$
Hence, $\nabla u (p) = 0$ holds.
By Corollary \ref{cor:maximum-principle}, $0 \leq u (p) = \nabla \cdot (A \nabla u) (p) - f (p) \leq - f (p),$ which implies $\norm{u}_{C^0} \leq \norm{f}_{C^0}.$
\end{proof}

Now, we are ready to prove the existence and uniqueness result.

\begin{theorem}
Let $\Omega \subset \bR^d$ be a $C^{2, \alpha}$-domain.
Suppose that $f \in C^{0, \alpha} (\bar \Omega)$ and $g \in C^{1, \alpha} (\bar \Omega)$ satisfy the compatibility condition
\begin{equation} \label{eq:compatibility-condition}
    \int_\Omega f
    =
    \int_{\partial \Omega} g
    .
\end{equation}
Then the problem (\ref{eq:main-pde}) admits a unique solution in the space
$$
    \cC
    \coloneqq
    \left\{
        u \in C^{2, \alpha} (\bar \Omega)
        \mid
        \int_\Omega u = 0
    \right\}
    .
$$
\end{theorem}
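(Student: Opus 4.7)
The plan is to handle uniqueness by an energy identity and existence by the method of continuity, connecting the operator in (\ref{eq:main-pde}) to the Laplacian Neumann problem already solved in \cite{nardi2015schauder}.

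Uniqueness reduces to showing that the homogeneous problem ($f = g = 0$) admits only the trivial solution in $\cC$. For any such $u$, integration by parts gives
\begin{align*}
    0
    =
    \int_\Omega u \, \nabla \cdot (A \nabla u) \, dx
    =
    \int_{\partial \Omega} u \ip{A \nabla u}{\vn} \, dS
    -
    \int_\Omega (\nabla u)^\prime A \nabla u \, dx ;
\end{align*}
the boundary term vanishes by $\ip{A \nabla u}{\vn} = 0$, and $(\nabla u)^\prime A \nabla u = (\nabla u)^\prime A_S \nabla u \geq \lambda \norm{\nabla u}^2$ by the uniform ellipticity of $A_S$. Hence $\nabla u \equiv 0$, so $u$ is constant, and the constraint $\int_\Omega u = 0$ forces $u \equiv 0$.

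For existence, define $A_t \coloneqq (1-t) I + t A$ for $t \in [0,1]$. Each $A_t$ has $C^{1,\alpha}$ entries and is uniformly elliptic with constant at least $\min\{1, \lambda\}$, and all relevant norms are bounded uniformly in $t$. Put
\begin{align*}
    \cB
    \coloneqq
    \left\{ (f, g) \in C^{0, \alpha}(\bar \Omega) \times C^{1, \alpha}(\bar \Omega) : \int_\Omega f = \int_{\partial \Omega} g \right\},
    \quad
    \cL_t u \coloneqq ( \nabla \cdot (A_t \nabla u),\ \ip{A_t \nabla u}{\vn} ) .
\end{align*}
The divergence theorem places $\cL_t u$ in $\cB$, and \cite{nardi2015schauder} implies that $\cL_0 : \cC \to \cB$ is a bounded linear bijection. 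By the method of continuity (e.g., Gilbarg--Trudinger Theorem 5.2), the theorem reduces to a uniform a priori estimate $\norm{u}_{C^{2,\alpha}(\bar \Omega)} \leq C \norm{\cL_t u}_\cB$ for $u \in \cC$ with $C$ independent of $t$; applying it at $t = 1$ then yields the unique solution to (\ref{eq:main-pde}).

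The chief obstacle is this uniform estimate. Boundary Schauder theory for oblique-derivative problems on $C^{2,\alpha}$-domains with $C^{1,\alpha}$ coefficients supplies the preliminary bound $\norm{u}_{C^{2,\alpha}} \leq C ( \norm{\cL_t u}_\cB + \norm{u}_{C^0} )$, uniformly in $t$ since the coefficient and ellipticity bounds are uniform; what remains is to absorb $\norm{u}_{C^0}$ into $\norm{\cL_t u}_\cB$ on $\cC$. I would proceed by contradiction and compactness: if the combined estimate failed, there would exist $t_n \in [0, 1]$ and $u_n \in \cC$ with $\norm{u_n}_{C^0} = 1$ and $\norm{\cL_{t_n} u_n}_\cB \to 0$. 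Passing to a subsequence with $t_n \to t^\ast$, the preliminary bound gives $\norm{u_n}_{C^{2,\alpha}} \leq C$, and the compact embedding $C^{2,\alpha}(\bar \Omega) \hookrightarrow C^2(\bar \Omega)$ extracts a $C^2$-limit $u_\infty \in C^{2,\alpha}(\bar \Omega)$ with $\int_\Omega u_\infty = 0$, $\cL_{t^\ast} u_\infty = 0$, and $\norm{u_\infty}_{C^0} = 1$. The uniqueness argument above (which used only uniform ellipticity of the coefficient matrix) applies equally to $A_{t^\ast}$ and forces $u_\infty \equiv 0$, contradicting $\norm{u_\infty}_{C^0} = 1$ and completing the proof.
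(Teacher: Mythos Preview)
Your proof is correct and follows a genuinely different route. The paper first solves the auxiliary problem $\nabla\cdot(A\nabla u)-u=f$, $\ip{A\nabla u}{\vn}=g$ (uniquely solvable by Theorem~5.1 of \cite{nardi2015schauder}), then builds a compact operator $T$ on $\cF=\{f\in C^{0,\alpha}(\bar\Omega):\int_\Omega f=0\}$ by composing the solution map $f\mapsto \cU(-f,0)$ with the compact embedding $C^{2,\alpha}\hookrightarrow C^{0,\alpha}$, and applies the Fredholm alternative to $I-T$; the $C^0$ bound needed for boundedness of the solution map comes from the maximum-principle argument of Lemma~\ref{lem:estimate} and Corollary~\ref{cor:maximum-principle}, not from compactness. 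You instead homotope to the Laplacian via $A_t=(1-t)I+tA$ and use the method of continuity, obtaining the uniform a priori estimate by a contradiction--compactness argument and handling uniqueness via the energy identity (which also makes explicit the step the paper passes over with ``admits only the trivial solution''). Your route is somewhat more elementary and ties the result directly to Nardi's Laplacian theorem; the paper's route, in exchange, produces the quantitative bound $\norm{u}_{C^0}\le\norm{f}_{C^0}$ for the auxiliary problem and avoids deforming the coefficient matrix. One small technicality: as you defined it, $\cB$ takes $g\in C^{1,\alpha}(\bar\Omega)$, but the second component $\ip{A_t\nabla u}{\vn}$ of $\cL_t u$ lives only on $\partial\Omega$, so $\cL_t$ does not map into $\cB$; replace $C^{1,\alpha}(\bar\Omega)$ by $C^{1,\alpha}(\partial\Omega)$ in the definition of $\cB$ (or quotient by functions vanishing on $\partial\Omega$) so that $\cL_t:\cC\to\cB$ is genuinely well defined and surjectivity of $\cL_0$ makes sense.
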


\begin{proof}
First consider the following problem
\begin{equation} \label{eq:augumented-pde}
    \begin{cases}
        \nabla \cdot (A \nabla u) - u = f \text{ in } \Omega \\
        \ip{A \nabla u}{\vn} = g \text{ on } \partial \Omega
    \end{cases}
    .
\end{equation}
Since the solution to this PDE for $f = g = 0$ is unique, it has a unique solution in $C^{2, \alpha} (\bar \Omega)$ for any $(f, g) \in C^{0, \alpha} (\bar \Omega) \times C^{1, \alpha} (\bar \Omega)$ by Theorem 5.1 of \cite{nardi2015schauder}.
(See also page 130 of \cite{gilbarg1977elliptic}.)
Hence, the map $C^{0, \alpha} (\bar \Omega) \times C^{1, \alpha} (\bar \Omega) \ni (f, g) \mapsto u \in C^{2, \alpha} (\bar \Omega),$ which assigns the solution $u$ to (\ref{eq:augumented-pde}) for each $(f, g),$ is well-defined.
Restricting this map to the space
$$
    \cA
    \coloneqq
    \left\{
        (f, g)
        \in
        C^{0, \alpha} (\bar \Omega) \times C^{1, \alpha} (\bar \Omega)
        \mid
        \int_\Omega f
        =
        \int_{\partial \Omega} g
    \right\}
    ,
$$
we define 
$$
    \cU : \cA \ni (f, g) \mapsto u \in \cC.
$$
This map is well-defined because for $(f, g) \in \cA,$ the corresponding solution $u$ satisfies
$$
    \int_\Omega u
    =
    \int_\Omega \nabla \cdot (A \nabla u)
    -
    \int_\Omega f
    =
    \int_{\partial \Omega} \ip{A \nabla u}{\vn}
    -
    \int_\Omega f
    =
    \int_{\partial \Omega} g
    -
    \int_\Omega f
    =
    0
    .
$$
Observe also that the map $\cU$ is bijective.
Indeed, it is injective since each $u \in \cC$ uniquely recovers $(f, g) \in \cA$ through (\ref{eq:augumented-pde}), and it is surjective because for $u \in \cC,$ the pair, $f = \nabla \cdot (A \nabla u) - u$ and an $C^{1, \alpha}$-extension $g$ of $g \mid_{\partial \Omega} = \ip{A \nabla u}{\vn},$ is in $\cA.$

For the space
$$
    \cF
    \coloneqq
    \left\{
        f \in C^{0, \alpha} (\bar \Omega)
        \mid
        \int_\Omega f = 0
    \right\}
    ,
$$
consider the map 
$$
    \tilde T : \cF \ni f \mapsto \cU (-f, 0) \in \cC.
$$
Notice that this map is well-defined because $(-f, 0) \in \cA$ for $f \in \cF.$
Since $u = \tilde T f,$ where $f \in \cF,$ solves the PDE
$$
    \begin{cases}
        \nabla \cdot (A \nabla u) - u = - f \text{ in } \Omega \\
        \ip{A \nabla u}{\vn} = 0 \text{ on } \partial \Omega
    \end{cases}
    ,
$$
we have the estimate
$$
    \norm{u}_{C^{2, \alpha}}
    \leq
    C 
    (\norm{u}_{C^0} + \norm{f}_{C^{0, \alpha}})
$$
by Theorem 6.30 of \cite{gilbarg1977elliptic}.
Combined with Lemma \ref{lem:estimate}, this yields
$$
    \norm{u}_{C^{2, \alpha}}
    \leq
    C 
    \norm{f}_{C^{0, \alpha}}
    ,
$$
which implies that $\tilde T$ is a bounded operator.
Recall also that the embedding $\iota : C^{2, \alpha} (\bar \Omega) \hookrightarrow C^{0, \alpha} (\bar \Omega)$ is compact by Lemma 6.36 of \cite{gilbarg1977elliptic}.
Hence, the map 
$$
    T \coloneqq \iota \circ \tilde T : \cF \to \cF
$$ 
is a compact operator.

We shall apply the Fredholm alternative to $I - T : \cF \to \cF.$
Consider the equation $(I - T) u = 0$ for $u \in \cF.$
Since $u = Tu = \tilde T u = \cU (- u, 0) \in C^{2, \alpha} (\bar \Omega),$ this equation is equivalent to $u$ solving
$$
    \begin{cases}
        \nabla \cdot (A \nabla u) = 0 \text{ in } \Omega \\
        \ip{A \nabla u}{\vn} = 0 \text{ on } \partial \Omega
    \end{cases}
    ,
$$
which admits only the trivial solution.
By the Fredholm alternative, for any $v \in \cF,$ there uniquely exists $u \in \cF$ such that $(I - T) u = v.$
In particular, for any $(f, g) \in \cA,$ there uniquely exists $u \in \cF$ such that $(I - T) u = \cU (f, g).$
Since $u = \tilde T u + \cU (f, g) \in \cC$ holds, $u$ uniquely solves the PDE (\ref{eq:main-pde}) in $\cC.$
\end{proof}

\section{Schauder Estimate}

Next, we provide a Schauder-type estimate of the solution to PDE (\ref{eq:main-pde}), following the proof of Theorem 4.1 of \cite{nardi2015schauder}.

\begin{theorem} \label{thm:schauder-estimate}
Let $\Omega \subset \bR^d$ be a $C^{2, \alpha}$-domain.
Suppose that $f \in C^{0, \alpha} (\bar \Omega)$ and $g \in C^{1, \alpha} (\bar \Omega)$ satisfy the compatibility condition (\ref{eq:compatibility-condition}).
Then, the unique solution $u \in \cC$ to the PDE (\ref{eq:main-pde}) satisfies
$$
    \norm{u}_{C^{2, \alpha}}
    \leq
    C 
    \left(\norm{f}_{C^{0, \alpha}} + \norm{g}_{C^{1, \alpha}}\right)
    .
$$
\end{theorem}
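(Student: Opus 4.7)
The plan is to obtain the Schauder estimate as an essentially automatic consequence of the preceding existence and uniqueness theorem via the closed graph theorem. Let $\cS : \cA \ni (f, g) \mapsto u \in \cC$ denote the (well-defined, linear) solution map established in the previous theorem. Equipped with the ambient norms from $C^{0,\alpha}(\bar\Omega) \times C^{1,\alpha}(\bar\Omega)$ and $C^{2,\alpha}(\bar\Omega)$, the spaces $\cA$ and $\cC$ are closed subspaces, each being the kernel of a continuous linear functional (namely $(f, g) \mapsto \int_\Omega f - \int_{\partial\Omega} g$ and $u \mapsto \int_\Omega u$), and hence Banach.

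The core step is to verify that $\cS$ has closed graph. Suppose $(f_n, g_n) \to (f, g)$ in $\cA$ and $u_n \coloneqq \cS(f_n, g_n) \to u$ in $C^{2,\alpha}(\bar\Omega)$. Since $A \in C^{1,\alpha}(\bar\Omega)$ and $u_n \to u$ in $C^2(\bar\Omega)$, the quantities $\nabla \cdot (A \nabla u_n)$ and $\ip{A \nabla u_n}{\vn}$ converge uniformly to $\nabla \cdot (A \nabla u)$ and $\ip{A \nabla u}{\vn}$ respectively, so taking limits in the PDE satisfied by $u_n$ shows that $u$ solves (\ref{eq:main-pde}) with data $(f, g)$. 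Together with $\int_\Omega u = \lim_n \int_\Omega u_n = 0$, this gives $u \in \cC$ and $u = \cS(f, g)$, confirming closedness of the graph. The closed graph theorem then produces a constant $C > 0$ such that $\norm{\cS(f, g)}_{C^{2,\alpha}} \leq C\bigl(\norm{f}_{C^{0,\alpha}} + \norm{g}_{C^{1,\alpha}}\bigr)$, which is precisely the desired bound.

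An alternative, more hands-on, route is a compactness contradiction: first invoke the boundary Schauder estimate (Theorem 6.30 of \cite{gilbarg1977elliptic}) to reduce the claim to a bound $\norm{u}_{C^0} \leq C(\norm{f}_{C^{0,\alpha}} + \norm{g}_{C^{1,\alpha}})$; if this failed, rescale a violating sequence so that $\norm{u_n}_{C^0} = 1$ while $(f_n, g_n) \to 0$ in $\cA$. The Schauder estimate then yields a uniform $C^{2,\alpha}$ bound, so Arzelà--Ascoli extracts a $C^2$-subsequential limit $u^* \in \cC$ solving the homogeneous problem, which must vanish by the uniqueness part of the previous theorem, contradicting $\norm{u^*}_{C^0} = 1$. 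I do not foresee a genuine obstacle in either route, since the analytic work (existence, uniqueness, and the $C^0$-control furnished by Lemma \ref{lem:estimate}) was already carried out in the preceding development; the Schauder estimate reduces here to a functional-analytic generality.
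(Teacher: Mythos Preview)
Your proposal is correct. Your primary route via the closed graph theorem is a genuinely different argument from the paper's, which instead runs the compactness contradiction you sketch as your alternative: the paper assumes the estimate fails, rescales so that $\norm{u_k}_{C^{2,\alpha}} = 1$ (rather than your $\norm{u_n}_{C^0} = 1$), invokes the argument on pp.~429--430 of \cite{nardi2015schauder} to pass to a $C^{2,\alpha}$-limit $u \in \cC$ solving the homogeneous problem, and derives the contradiction $0 = \norm{u}_{C^{2,\alpha}} = \lim_k \norm{u_k}_{C^{2,\alpha}} = 1$. Your closed graph argument is slicker and makes transparent that the estimate is a purely functional-analytic consequence of well-posedness: once the solution map $\cS$ is known to be well-defined and linear between Banach spaces, continuity of the differential operator $u \mapsto (\nabla\cdot(A\nabla u),\, \ip{A\nabla u}{\vn})$ immediately gives the closed graph, and no compactness or Schauder a priori estimate is needed at this stage. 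The paper's route (and your alternative) has the advantage of being more self-contained and of exhibiting where the compact embedding $C^{2,\alpha} \hookrightarrow C^2$ enters; your choice to normalize in $C^0$ rather than $C^{2,\alpha}$ is in fact cleaner, since the contradiction $\norm{u^*}_{C^0} = 1$ follows directly from uniform convergence without needing to upgrade the subsequential convergence back to $C^{2,\alpha}$. One small remark: your closing reference to Lemma~\ref{lem:estimate} is a bit loose, since that lemma treats the auxiliary problem with the $-u$ term and $g = 0$ and is not invoked in either of your two routes here; the relevant prior input is solely the existence and uniqueness theorem.
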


\begin{proof}
To obtain the estimate, assume otherwise, i.e., for each $k \in \bN,$ there is $(f_k, g_k) \in C^{0, \alpha} (\bar \Omega) \times C^{1, \alpha} (\bar \Omega)$ satisfying (\ref{eq:compatibility-condition}) such that the corresponding solution $u_k \in \cC$ to (\ref{eq:main-pde}) satisfies
\begin{equation} \label{eq:ineq}
    \norm{u_k}_{C^{2, \alpha}}
    >
    k 
    \left(\norm{f_k}_{C^{0, \alpha}} + \norm{g_k}_{C^{1, \alpha}}\right)
    .
\end{equation}
By rescaling $(f_k, g_k),$ we assume $\norm{u_k}_{C^{2, \alpha}} = 1$ without loss of generality, since $u_k \neq 0.$
By the Ascoli-Arzel\`a theorem, $u_k$ converges to a function $u$ in $\cC$ up to subsequence.
Moreover, (\ref{eq:ineq}), combined with the normalization $\norm{u_k}_{C^{2, \alpha}} = 1,$ implies $(f_k, g_k) \to (0, 0)$ in $C^{0, \alpha} (\bar \Omega) \times C^{1, \alpha} (\bar \Omega).$
Taking the limit $k \to \infty$ along the subsequence, we obtain
$$
    \begin{cases}
        \nabla \cdot (A \nabla u) = 0 \text{ in } \Omega \\
        \ip{A \nabla u}{\vn} = 0 \text{ on } \partial \Omega
    \end{cases}
    ,
$$
which implies $u = 0.$
However, this yields
$$
    0
    =
    \norm{u}_{C^{2, \alpha}}
    =
    \lim_{k \to \infty} \norm{u_k}_{C^{2, \alpha}}
    =
    1
    ,
$$
which is a contradiction.
Hence, the desired estimate holds.
\end{proof}

\section{Dependence on Parameters}

Finally, we show the stability of PDE (\ref{eq:main-pde}) to perturbations in the data.

\begin{theorem} \label{thm:continuious-dependence}
For $0 \leq \varepsilon \ll 1,$ consider functions $f_\varepsilon \in C^{0, \alpha} (\bar \Omega),$ $g_\varepsilon \in C^{1, \alpha} (\bar \Omega),$ and $a_\varepsilon^{ij} \in C^{1, \alpha} (\bar \Omega).$
Suppose that they satisfy
$
    \norm{f_\varepsilon}_{C^{0, \alpha} (\bar \Omega)} < f_M
    ,
    \norm{g_\varepsilon}_{C^{1, \alpha} (\bar \Omega)} < g_M
    ,
    \norm{a_\varepsilon^{ij}}_{C^{1, \alpha} (\bar \Omega)} < a_M
    ,
$
and
$
    \lim_{\varepsilon \searrow 0} \norm{f_\varepsilon - f_0}_{C^{0, \alpha} (\bar \Omega)} = 0
    ,
    \lim_{\varepsilon \searrow 0} \norm{g_\varepsilon - g_0}_{C^{1, \alpha} (\bar \Omega)} = 0
    ,
    \lim_{\varepsilon \searrow 0} \norm{a_\varepsilon^{ij} - a_0^{ij}}_{C^{1, \alpha} (\bar \Omega)} = 0
    .
$
Let $u_\varepsilon \in C^{2, \alpha} (\bar \Omega)$ be the solution to the PDE
\begin{equation}
    \begin{cases}
        \nabla \cdot (A_\varepsilon \nabla u) = f_\varepsilon \text{ in } \Omega \\
        \ip{A_\varepsilon \nabla u}{\vn} = g_\varepsilon \text{ on } \partial \Omega
    \end{cases}
    .
\end{equation}
Then $\lim_{\varepsilon \searrow 0} \norm{u_\varepsilon - u_0}_{C^{2, \alpha} (\bar \Omega)} \to 0$ holds.
\end{theorem}

\begin{proof}
By Theorem \ref{thm:schauder-estimate}, it holds
\begin{align*}
    &\quad
    \norm{u_\varepsilon - u_0}_{C^{2, \alpha} (\bar \Omega)}
    \\
    &\leq
    C
    \left(
        \norm{f_\varepsilon - f_0 - \nabla \cdot ((A_\varepsilon - A_0) \nabla u_\varepsilon)}_{C^{0, \alpha} (\bar \Omega)}
        +
        \norm{g_\varepsilon - g_0 - \ip{(A_\varepsilon - A_0) \nabla u_\varepsilon}{\vn}}_{C^{1, \alpha} (\bar \Omega)}
    \right)
    ,
\end{align*}
where the coefficient can be taken identically since $\norm{a_\varepsilon^{ij}}_{C^{1, \alpha} (\bar \Omega)}$ is bounded.
Combining the conditions in the statement with the estimate
$
    \norm{u_\varepsilon}_{C^{2, \alpha} (\bar \Omega)}
    \leq
    C
    (f_M + g_M)
$
yields the conclusion.
\end{proof}

The following is shown in a similar way.
\begin{theorem}
Under the same setup as Theorem \ref{thm:continuious-dependence}, assume also that there exist functions $D_\varepsilon f_0 \in C^{0, \alpha} (\bar \Omega),$ $D_\varepsilon g_0 \in C^{1, \alpha} (\bar \Omega),$ and $D_\varepsilon a_0^{ij} \in C^{1, \alpha} (\bar \Omega)$ such that $
    \lim_{\varepsilon \searrow 0} \norm{\frac{f_\varepsilon - f_0}{\varepsilon} - D_\varepsilon f_0}_{C^{0, \alpha} (\bar \Omega)} = 0
    ,
$
$
    \lim_{\varepsilon \searrow 0} \norm{\frac{g_\varepsilon - g_0}{\varepsilon} - D_\varepsilon g_0}_{C^{1, \alpha} (\bar \Omega)} = 0
    ,
$
and
$
    \lim_{\varepsilon \searrow 0} \norm{\frac{A_\varepsilon - A_0}{\varepsilon} - D_\varepsilon A_0}_{C^{1, \alpha} (\bar \Omega)} = 0
$
hold.
Then there exists $D_\varepsilon u_0 \in C^{2, \alpha} (\bar \Omega)$ such that 
$
    \lim_{\varepsilon \searrow 0} \norm{\frac{u_\varepsilon - u_0}{\varepsilon} - D_\varepsilon u_0}_{C^{2, \alpha} (\bar \Omega)} = 0
$
holds.
\end{theorem}

\printbibliography

\end{document}